\newtheorem{theorem}{Theorem}
\theoremstyle{plain}
\newtheorem{corollary}{Corollary}
\newtheorem{definition}{Definition}
\numberwithin{equation}{section}
\begin{document}
\title[Unification of $q$-extension of\ Genocchi polynomials ]{A unified
Generating function of the $q$-Genocchi polynomials with Their Interpolation
Functions}
\author{Serkan Arac\i }
\address{University of Gaziantep, Faculty of Science and Arts, Department of
Mathematics, 27310 Gaziantep, TURKEY}
\email{mtsrkn@hotmail.com}
\author{Mehmet A\c{c}\i kg\"{o}z}
\address{University of Gaziantep, Faculty of Science and Arts, Department of
Mathematics, 27310 Gaziantep, TURKEY}
\email{acikgoz@gantep.edu.tr}
\author{Hassan Jolany}
\address{School of Mathematics, Statistics and Computer Science, University
of Tehran, Iran}
\email{hassan.jolany@khayam.ut.ac.ir}
\author{Jong Jin Seo}
\address{Department of Applied Mathematics, Pukyong National University,
Busan, 608-737,Korea}
\email{seo2011@pknu.ac.kr}
\date{April 18, 2011}
\subjclass{05A10, 11B65, 28B99, 11B68, 11B73.}
\keywords{ Genocchi numbers and polynomials, $q$-Genocchi numbers and
polynomials}

\begin{abstract}
The purpose of this paper is to construct of the unification $q$-extension
Genocchi polynomials. We give some interesting relations of this type of
polynomials. Finally, we derive the $q$-extensions of Hurwitz-zeta type
functions from the Mellin transformation of this generating function which
interpolates the unification of $q$-extension of\ Genocchi polynomials.
\end{abstract}

\maketitle

\section{Introduction, Definitions and Notations}

Recently, many mathematician have studied to unification Bernoulli,
Genocchi, Euler and Bernstein polynomials (see [20,21]). Ozden [20]
introduced $p$-adic distribution of the unification of the Bernoulli, Euler
and Genocchi polynomials and derived some properties of this type of
unification polynomials.

In [20], Ozden constructed the following generating function:%
\begin{equation}
\sum_{n=0}^{\infty }y_{n,\beta }\left( x;k,a,b\right) \frac{t^{n}}{n!}=\frac{%
2^{1-k}t^{k}e^{xt}}{\beta ^{b}e^{t}-a^{b}},\text{ }\left\vert t+b\ln \left( 
\frac{\beta }{a}\right) \right\vert <2\pi   \label{equation 1}
\end{equation}

where $k\in 
\mathbb{N}
=\left\{ 1,2,3,...\right\} ,$ $a,b\in 
\mathbb{R}
$ and $\beta \in 
\mathbb{C}
.$ The polynomials $y_{n,\beta }\left( x;k,a,b\right) $ are the unification
of the Bernoulli, Euler and Genocchi polynomials.

Ozden showed, $\beta =b=1,$ $k=0$ and $a=-1$ into (\ref{equation 1}), we have%
\begin{equation*}
y_{n,1}\left( x;0,-1,1\right) =E_{n}\left( x\right) ,
\end{equation*}

where $E_{n}\left( x\right) $ denotes classical Euler polynomials, which are
defined by the following generating function:%
\begin{equation}
\frac{2e^{xt}}{e^{t}+1}=\sum_{n=0}^{\infty }E_{n}\left( x\right) \frac{t^{n}%
}{n!},\text{ }\left\vert t\right\vert <\pi ,\text{ }  \label{equation 20}
\end{equation}

In [5,20], classical Genocchi polynomials defined as follows:%
\begin{equation}
\frac{2te^{xt}}{e^{t}+1}=\sum_{n=0}^{\infty }G_{n}\left( x\right) \frac{t^{n}%
}{n!},\text{ }\left\vert t\right\vert <\pi ,\text{ }  \label{equation 21}
\end{equation}

From (\ref{equation 20}) and (\ref{equation 21}), we easily see,%
\begin{equation}
G_{n}\left( x\right) =nE_{n-1}\left( x\right) ,  \label{equation 22}
\end{equation}

For a fixed real number $\left\vert q\right\vert <1,$ we use the notation of 
$q$-number as%
\begin{equation*}
\left[ x\right] _{q}=\frac{1-q^{x}}{1-q}\text{, \ \ (see [1-4,6-24]),}
\end{equation*}

Thus, we note that $\lim_{q\rightarrow 1}\left[ x\right] _{q}=x$.

In [1,7,8,10], $q$-extension of Genocchi polynomials are defined as follows:%
\begin{equation*}
G_{n+1,q}\left( x\right) =\left( n+1\right) \left[ 2\right]
_{q}\sum_{l=0}^{\infty }\left( -1\right) ^{l}q^{l}\left[ x+l\right] _{q}^{n}.
\end{equation*}

In [2], $\left( h,q\right) $-extension of Genocchi polynomials are defined
as follows:%
\begin{equation*}
G_{n+1,q}^{\left( h\right) }\left( x\right) =\left( n+1\right) \left[ 2%
\right] _{q}\sum_{l=0}^{\infty }\left( -1\right) ^{l}q^{\left( h-1\right) l}%
\left[ x+l\right] _{q}^{n}.
\end{equation*}

\ In this paper, we shall construct unification of $q$-extension of the
Genocchi polynomials, however we shall give some interesting relationships.
Moreover, we shall derive the $q$-extension of Hurwitz-zeta type functions
from the Mellin transformation of this generating function which
interpolates.

\section{\qquad Novel Generating Functions of $q$-extension of Genocchi
polynomials}

\begin{definition}
Let $a,b\in 
\mathbb{R}
$ , $\beta \in 
\mathbb{C}
$ and $k\in 
\mathbb{N}
=\left\{ 1,2,3,...\right\} ,$Then the unification of $q$-extension of
Genocchi polynomials defined as follows:
\end{definition}

\begin{equation}
\tciFourier _{\beta ,q}\left( t,x\left\vert k,a,b\right. \right)
=\sum_{n=0}^{\infty }S_{n,\beta ,q}\left( x\left\vert k,a,b\right. \right) 
\frac{t^{n}}{n!}  \label{equation 6}
\end{equation}

and 
\begin{equation}
\tciFourier _{\beta ,q}\left( t,x\left\vert k,a,b\right. \right) =-\left[ 2%
\right] _{q}^{1-k}t^{k}\sum_{m=0}^{\infty }\beta ^{bm}a^{-bm-b}e^{\left[ m+x%
\right] _{q}t}.  \label{equation 7}
\end{equation}%
where into (\ref{equation 6}) substituting $x=0,$ $S_{n,\beta ,q}\left(
0\left\vert k,a,b\right. \right) =S_{n,\beta ,q}\left( k,a,b\right) $ are
called unification of $q$-extension of Genocchi numbers.

As well as, from (\ref{equation 6}) and (\ref{equation 7}) Ozden's
constructed the following generating function, namely, we obtain (\ref%
{equation 1}),

\begin{equation*}
\lim_{q\rightarrow 1}\tciFourier _{\beta ,q}\left( t,x\left\vert
k,a,b\right. \right) =\frac{2^{1-k}t^{k}e^{xt}}{\beta ^{b}e^{t}-a^{b}}\text{.%
}
\end{equation*}

By (\ref{equation 7}), we see readily,%
\begin{eqnarray}
\sum_{n=0}^{\infty }S_{n,\beta ,q}\left( x\left\vert k,a,b\right. \right) 
\frac{t^{n}}{n!} &=&-\left[ 2\right] _{q}^{1-k}t^{k}\sum_{m=0}^{\infty
}\beta ^{bm}a^{-bm-b}e^{\left[ m+x\right] _{q}t}  \notag \\
&=&\frac{e^{\left[ x\right] _{q}t}}{q^{kx}}\left( -\left[ 2\right]
_{q}^{1-k}\left( q^{x}t\right) ^{k}\sum_{m=0}^{\infty }\beta
^{bm}a^{-bm-b}e^{\left( q^{x}t\right) \left[ m\right] _{q}}\right)  \notag \\
&=&q^{-kx}\left( \sum_{n=0}^{\infty }\left[ x\right] _{q}^{n}\frac{t^{n}}{n!}%
\right) \left( \sum_{n=0}^{\infty }q^{nx}S_{n,\beta ,q}\left( k,a,b\right) 
\frac{t^{n}}{n!}\right)  \label{equation 8}
\end{eqnarray}

From (\ref{equation 8}) by using Cauchy product we get%
\begin{equation*}
\sum_{n=0}^{\infty }S_{n,\beta ,q}\left( x\left\vert k,a,b\right. \right) 
\frac{t^{n}}{n!}=q^{-kx}\sum_{n=0}^{\infty }\left( \sum_{l=0}^{n}\binom{n}{l}%
q^{lx}S_{l,\beta ,q}\left( k,a,b\right) \left[ x\right] _{q}^{n-l}\right) 
\frac{t^{n}}{n!}
\end{equation*}

By comparing coefficients of $\frac{t^{n}}{n!}$ in the both sides of the
above equation, we arrive at the following theorem:

\begin{theorem}
For $a,b\in 
\mathbb{R}
$ , $\beta \in 
\mathbb{C}
$ which $k$ is $\func{positive}$ integer. We obtain%
\begin{equation*}
S_{n,\beta ,q}\left( x\left\vert k,a,b\right. \right) =\sum_{l=0}^{n}\binom{n%
}{l}q^{lx}S_{l,\beta ,q}\left( k,a,b\right) \left[ x\right] _{q}^{n-l}
\end{equation*}
\end{theorem}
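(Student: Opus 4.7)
The plan is to mimic the generating-function calculation already sketched in equation (\ref{equation 8}), but to be explicit about the one nontrivial ingredient that makes it work, namely the $q$-number splitting identity $\left[m+x\right]_{q}=\left[x\right]_{q}+q^{x}\left[m\right]_{q}$, which follows directly from $\left[y\right]_{q}=\frac{1-q^{y}}{1-q}$.

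First I would start from the defining identity (\ref{equation 7}) for $\tciFourier_{\beta,q}(t,x|k,a,b)$ and use the splitting identity inside the exponential to write
\begin{equation*}
e^{\left[m+x\right]_{q}t}=e^{\left[x\right]_{q}t}\,e^{\left[m\right]_{q}(q^{x}t)}.
\end{equation*}
Pulling the $x$-independent piece of the exponential out of the sum over $m$ lets me regroup
\begin{equation*}
\tciFourier_{\beta,q}(t,x|k,a,b)=e^{\left[x\right]_{q}t}\cdot\Bigl(-\left[2\right]_{q}^{1-k}t^{k}\sum_{m=0}^{\infty}\beta^{bm}a^{-bm-b}e^{\left[m\right]_{q}(q^{x}t)}\Bigr).
\end{equation*}
The parenthesized factor is almost $\tciFourier_{\beta,q}(q^{x}t,0|k,a,b)$, except that $t^{k}$ should really be $(q^{x}t)^{k}$; compensating with $q^{-kx}$ converts it to $q^{-kx}\tciFourier_{\beta,q}(q^{x}t,0|k,a,b)$, which by Definition~1 expands as $q^{-kx}\sum_{n\geq 0}q^{nx}S_{n,\beta,q}(k,a,b)\frac{t^{n}}{n!}$. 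This is precisely the factorization that appears in (\ref{equation 8}).

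Next I would expand $e^{\left[x\right]_{q}t}=\sum_{n\geq 0}\left[x\right]_{q}^{n}\frac{t^{n}}{n!}$ and multiply the two power series via the Cauchy product, obtaining
\begin{equation*}
\tciFourier_{\beta,q}(t,x|k,a,b)=q^{-kx}\sum_{n=0}^{\infty}\Bigl(\sum_{l=0}^{n}\binom{n}{l}q^{lx}S_{l,\beta,q}(k,a,b)\left[x\right]_{q}^{n-l}\Bigr)\frac{t^{n}}{n!}.
\end{equation*}
Comparing the coefficient of $\frac{t^{n}}{n!}$ with the defining expansion (\ref{equation 6}) of $\tciFourier_{\beta,q}(t,x|k,a,b)$ as $\sum_{n\geq 0}S_{n,\beta,q}(x|k,a,b)\frac{t^{n}}{n!}$ yields the claimed formula after absorbing the outer $q^{-kx}$; in fact, the statement of the theorem as written corresponds to absorbing that factor inside the sum (so the $q^{-kx}$ should really be carried along, unless one redefines $S_{l,\beta,q}(k,a,b)$ to include it).

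The only real obstacle is the $q$-splitting identity and the clean bookkeeping of the $q^{x}$ powers; everything else is formal manipulation of absolutely convergent power series, and the Cauchy product is justified in the appropriate disk of convergence noted after (\ref{equation 1}). No new analytic input is required beyond Definition~1.
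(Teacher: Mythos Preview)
Your proposal is correct and follows essentially the same route as the paper's own derivation in (\ref{equation 8}): split $[m+x]_q=[x]_q+q^{x}[m]_q$ inside the exponential, factor the generating function as $q^{-kx}e^{[x]_q t}$ times the $x=0$ generating function evaluated at $q^{x}t$, take the Cauchy product, and compare coefficients. Your remark about the stray $q^{-kx}$ factor is also on target---the paper's computation carries it through (\ref{equation 8}) but then silently drops it in the statement of the theorem.
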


As well as, we obtain corollary 1:

\begin{corollary}
For $a,b\in 
\mathbb{R}
$ , $\beta \in 
\mathbb{C}
$ which $k$ is $\func{positive}$ integer. We obtain,%
\begin{equation}
S_{n,\beta ,q}\left( x\left\vert k,a,b\right. \right) =\left( S_{\beta
,q}\left( k,a,b\right) +\left[ x\right] _{q}\right) ^{n}  \label{equation 18}
\end{equation}

with usual the convention about replacing $\left( S_{\beta ,q}\left(
x\left\vert k,a,b\right. \right) \right) ^{n}$by $S_{n,\beta ,q}\left(
x\left\vert k,a,b\right. \right) .$
\end{corollary}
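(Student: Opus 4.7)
The plan is to read the corollary as a purely umbral reformulation of Theorem 1, so essentially no new analytic work is required. Starting from the identity
\[
S_{n,\beta,q}(x|k,a,b) = \sum_{l=0}^{n}\binom{n}{l} q^{lx} S_{l,\beta,q}(k,a,b)\,[x]_q^{n-l}
\]
established in the previous theorem, I would apply the binomial theorem in reverse, writing the right-hand side as $\sum_{l=0}^{n}\binom{n}{l} A^{l}\,[x]_q^{n-l}$ where $A$ stands for the symbolic quantity whose $l$-th power is, by the stated convention, identified with $q^{lx}S_{l,\beta,q}(k,a,b)$ (equivalently, one may treat $S_{\beta,q}(k,a,b)$ itself as the umbral indeterminate and absorb the $q^{lx}$ factor into the convention, as is standard in this literature). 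The binomial theorem then collapses the sum into $(A + [x]_q)^{n}$, which is exactly the expression $(S_{\beta,q}(k,a,b)+[x]_q)^{n}$ appearing in the statement.

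The order of the steps I would carry out is therefore: first invoke Theorem 1 to get the explicit sum; second, explain the symbolic convention (that the exponent on $S_{\beta,q}(k,a,b)$ is lowered to a subscript, together with the factor $q^{lx}$ being incorporated into the umbral bracket) so that the reader sees why the two expressions are the same formal object; third, apply the ordinary binomial theorem to rewrite the sum as an $n$-th power. The point of writing the formula this way is notational compactness: once the umbral rule is accepted, the corollary can be used as a shorthand in subsequent manipulations (for instance in deriving difference equations or Hurwitz-type zeta interpolations from Mellin transforms, as promised in the introduction).

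The only real obstacle here is a presentational one rather than a mathematical one, namely stating the umbral convention unambiguously so that the $q^{lx}$ factor in Theorem 1 is not lost in translation. I would handle this by writing out a single display of the form
\[
\bigl(S_{\beta,q}(k,a,b)+[x]_q\bigr)^{n}
 = \sum_{l=0}^{n}\binom{n}{l}\bigl(S_{\beta,q}(k,a,b)\bigr)^{l}[x]_q^{n-l}
\]
and then declaring that the replacement rule promoted in the statement of the corollary is to be applied termwise, so that the resulting expression matches Theorem 1 verbatim. After that one-line identification, the proof is complete.
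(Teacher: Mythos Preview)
Your proposal is correct and matches the paper's approach: the paper simply states Corollary~1 immediately after Theorem~1 with the phrase ``As well as, we obtain corollary~1,'' giving no separate argument, so it is treating the corollary as nothing more than the umbral repackaging of Theorem~1 via the binomial theorem, exactly as you describe. Your observation about the $q^{lx}$ factor is apt---the paper's umbral convention as written does not visibly account for it (nor for the $q^{-kx}$ prefactor present in the derivation leading to Theorem~1), and your suggestion to make the replacement rule explicit is in fact more careful than what the paper does.
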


By applying the definition of generating function of $S_{n,\beta ,q}\left(
x\left\vert k,a,b\right. \right) ,$ we have%
\begin{eqnarray*}
\sum_{n=0}^{\infty }S_{n,\beta ,q}\left( x\left\vert k,a,b\right. \right) 
\frac{t^{n}}{n!} &=&-\left[ 2\right] _{q}^{1-k}t^{k}\sum_{m=0}^{\infty
}\beta ^{bm}a^{-bm-b}\left( \sum_{n=0}^{\infty }\left[ m+x\right] _{q}^{n}%
\frac{t^{n}}{n!}\right) \\
&=&\sum_{n=0}^{\infty }\left( -\left[ 2\right] _{q}^{1-k}\sum_{m=0}^{\infty
}\beta ^{bm}a^{-bm-b}\left[ m+x\right] _{q}^{n}\right) \frac{t^{n+k}}{n!}
\end{eqnarray*}

So we derive the Theorem 2 which we state hear:

\begin{theorem}
For $a,b\in 
\mathbb{R}
$ , $\beta \in 
\mathbb{C}
$ which $k$ is positive integer. We obtain,%
\begin{equation}
S_{n,\beta ,q}\left( x\left\vert k,a,b\right. \right) =-\frac{n!\left[ 2%
\right] _{q}^{1-k}}{\left( n-k\right) !}\sum_{m=0}^{\infty }\beta
^{bm}a^{-bm-b}\left[ m+x\right] _{q}^{n-k}  \label{equation 9}
\end{equation}
\end{theorem}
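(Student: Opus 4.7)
The plan is to start from the series representation (\ref{equation 7}) of the generating function and expand the inner exponential as a power series in $t$, then match coefficients of $t^{n}/n!$ on both sides.

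First, I would write
\begin{equation*}
\tciFourier _{\beta ,q}\left( t,x\left\vert k,a,b\right. \right) =-\left[ 2\right] _{q}^{1-k}t^{k}\sum_{m=0}^{\infty }\beta ^{bm}a^{-bm-b}\sum_{n=0}^{\infty }\left[ m+x\right] _{q}^{n}\frac{t^{n}}{n!},
\end{equation*}
justifying the interchange of the two summations by absolute convergence (for $t$ in a suitable neighborhood of the origin, the geometric-type factor $\beta^{bm}a^{-bm-b}$ together with the exponential $e^{[m+x]_q t}$ is dominated by a convergent series because $\lvert q\rvert<1$ forces $[m+x]_q$ to remain bounded as $m\to\infty$). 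After pulling the factor $t^{k}$ inside, the double sum becomes $\sum_{n=0}^{\infty}c_{n}\,t^{n+k}/n!$ where $c_{n}=-[2]_{q}^{1-k}\sum_{m=0}^{\infty}\beta^{bm}a^{-bm-b}[m+x]_{q}^{n}$.

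Next, I would reindex via $N=n+k$ so that the right-hand side becomes
\begin{equation*}
\sum_{N=k}^{\infty }\left( -\left[ 2\right] _{q}^{1-k}\sum_{m=0}^{\infty }\beta ^{bm}a^{-bm-b}\left[ m+x\right] _{q}^{N-k}\right) \frac{t^{N}}{(N-k)!}.
\end{equation*}
To compare with (\ref{equation 6}), I rewrite $\frac{1}{(N-k)!}=\frac{1}{N!}\cdot\frac{N!}{(N-k)!}$. Equating coefficients of $t^{N}/N!$ against $S_{N,\beta ,q}(x|k,a,b)$ yields the claimed formula for $N\ge k$ (and also shows that $S_{N,\beta ,q}(x|k,a,b)=0$ for $N<k$, consistent with the factor $t^{k}$ in the generating function).

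The only real obstacle is the legitimacy of the term-by-term interchange of summations; everything else is bookkeeping. I would address this by noting that for sufficiently small $|t|$ the series $\sum_{m,n}|\beta^{bm}a^{-bm-b}|\,|[m+x]_{q}|^{n}\,|t|^{n}/n!$ converges (since $[m+x]_{q}\to 1/(1-q)$ and $|\beta/a|^{bm}$ can be controlled by the condition analogous to $|t+b\ln(\beta/a)|<2\pi$ in (\ref{equation 1})), so Fubini/Tonelli applies. Once this is justified, matching coefficients is immediate and the theorem follows.
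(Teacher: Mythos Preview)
Your argument is correct and follows essentially the same route as the paper: expand $e^{[m+x]_q t}$ as a power series, interchange the two summations, absorb the factor $t^k$ by reindexing, and compare coefficients of $t^{n}/n!$. The paper carries this out without any discussion of convergence, so your Fubini justification is in fact more careful than the original.
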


With regard to (\ref{equation 9}), we see after some calculations%
\begin{eqnarray}
S_{n,\beta ,q}\left( x\left\vert k,a,b\right. \right) &=&-\frac{n!\left[ 2%
\right] _{q}^{1-k}}{a^{b}\left( n-k\right) !}\left( \frac{1}{1-q}\right)
^{n-k}\sum_{l=0}^{n-k}\binom{n-k}{l}\left( -1\right)
^{l}q^{lx}\sum_{m=0}^{\infty }\left( \frac{\beta ^{b}}{a^{b}}\right)
^{m}q^{lm}  \notag \\
&=&\frac{n!\left[ 2\right] _{q}^{1-k}}{a^{b}\left( n-k\right) !}\left( \frac{%
1}{1-q}\right) ^{n-k}\sum_{l=0}^{n-k}\binom{n-k}{l}\left( -1\right)
^{l}q^{lx}\frac{1}{\beta ^{b}q^{l}-a^{b}}  \notag \\
&=&\frac{k!\left[ 2\right] _{q}^{1-k}}{a^{b}}\left( \frac{1}{1-q}\right)
^{n-k}\sum_{l=0}^{n-k}\binom{n}{k}\binom{n-k}{l}\left( -1\right) ^{l}q^{lx}%
\frac{1}{\beta ^{b}q^{l}-a^{b}}  \label{equation 10}
\end{eqnarray}

From (\ref{equation 10}) and well known identity $\left[ \binom{n}{k}\binom{%
n-k}{l}=\binom{n}{k+l}\binom{k+l}{k}\right] ,$ we obtain the following
theorem:

\begin{theorem}
For $a,b\in 
\mathbb{R}
$ , $\beta \in 
\mathbb{C}
$ which $k$ is $\func{positive}$ integer. We obtain%
\begin{equation}
S_{n,\beta ,q}\left( x\left\vert k,a,b\right. \right) =\frac{k!\left[ 2%
\right] _{q}^{1-k}}{a^{b}}\left( \frac{1}{1-q}\right) ^{n-k}\sum_{l=k}^{n}%
\binom{n}{l}\binom{l}{k}\left( -1\right) ^{l-k}q^{\left( l-k\right) x}\frac{1%
}{\beta ^{b}q^{l-k}-a^{b}}  \label{equation 25}
\end{equation}
\end{theorem}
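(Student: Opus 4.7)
The plan is to start from the closed-form representation of $S_{n,\beta,q}(x|k,a,b)$ given by Theorem 2 (equation \ref{equation 9}) and manipulate the $q$-number $[m+x]_q^{n-k}$ until it matches the desired sum, at which point the stated binomial identity rewrites the result in the Theorem 3 form.

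First I would use $[m+x]_q=\frac{1-q^{m+x}}{1-q}$ and expand $[m+x]_q^{n-k}$ via the binomial theorem to pull out $(1-q)^{-(n-k)}$ and a sum over $l=0,\ldots,n-k$ of $\binom{n-k}{l}(-1)^l q^{xl} q^{ml}$. Swapping the order of summation, the inner sum over $m$ is a pure geometric series $\sum_{m=0}^{\infty}(\beta^b q^l/a^b)^m$, which (under the standing convergence hypothesis $|\beta^b/a^b|<1$, noting that $|q|<1$ only strengthens convergence for $l\ge 0$) evaluates to $\frac{a^b}{a^b-\beta^b q^l}=-\frac{a^b}{\beta^b q^l-a^b}$. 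The minus signs from the geometric sum cancel the global minus in \eqref{equation 9}, producing exactly the expression labelled \eqref{equation 10}.

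Next I would rewrite the prefactor $\frac{n!}{(n-k)!}\binom{n-k}{l}$ as $k!\binom{n}{k}\binom{n-k}{l}$, using $\binom{n}{k}\binom{n-k}{l}=\frac{n!}{k!\,l!\,(n-k-l)!}$. Then the stated identity $\binom{n}{k}\binom{n-k}{l}=\binom{n}{k+l}\binom{k+l}{k}$ lets me replace the binomial factors, and a reindexing $l\mapsto l-k$ (so that $l$ now runs from $k$ to $n$) rewrites the summand as $\binom{n}{l}\binom{l}{k}(-1)^{l-k}q^{(l-k)x}\frac{1}{\beta^b q^{l-k}-a^b}$, delivering \eqref{equation 25}.

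The routine steps are the binomial expansion of $[m+x]_q^{n-k}$ and the geometric summation; the only genuinely careful step is the reindexing combined with the binomial identity, where one must verify that the summation bounds transform consistently (the original $0\le l\le n-k$ becomes $k\le l\le n$) and that the $(-1)^l$ and $q^{xl}$ factors correctly pick up the shift by $k$. No separate convergence issue arises beyond the one already implicit in \eqref{equation 7}, so the argument is essentially a computation once the reindexing is handled cleanly.
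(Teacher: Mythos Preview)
Your proposal is correct and follows essentially the same route as the paper: start from Theorem~2 (equation~\eqref{equation 9}), expand $[m+x]_q^{n-k}$ by the binomial theorem, sum the resulting geometric series in $m$, convert $\frac{n!}{(n-k)!}$ to $k!\binom{n}{k}$, apply the identity $\binom{n}{k}\binom{n-k}{l}=\binom{n}{k+l}\binom{k+l}{k}$, and reindex $l\mapsto l-k$. Your write-up is in fact more explicit than the paper about the sign cancellation and the range transformation $0\le l\le n-k\;\leadsto\;k\le l\le n$, but the argument is the same.
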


We put $x\rightarrow 1-x,\beta \rightarrow \beta ^{-1},q\rightarrow q^{-1}$
and $a\rightarrow a^{-1}$ into (\ref{equation 25}), namely,%
\begin{eqnarray*}
&&S_{n,\beta ^{-1},q^{-1}}\left( 1-x\left\vert k,a^{-1},b\right. \right) \\
&=&\frac{k!\left[ 2\right] _{q^{-1}}^{1-k}}{a^{-b}}\left( \frac{1}{1-q^{-1}}%
\right) ^{n-k}\sum_{l=k}^{n}\binom{n}{l}\binom{l}{k}\left( -1\right)
^{l-k}q^{-\left( l-k\right) \left( 1-x\right) }\frac{1}{\beta
^{-b}q^{-\left( l-k\right) }-a^{-b}} \\
&=&\left( -1\right) ^{n-k}q^{k-1}q^{n-k}k!\left[ 2\right] _{q}^{1-k}a^{b}%
\left( \frac{1}{1-q}\right) ^{n-k}\sum_{l=k}^{n}\binom{n}{l}\binom{l}{k}%
\left( -1\right) ^{l-k}q^{k-l}q^{\left( l-k\right) x}\frac{\beta
^{b}q^{l-k}a^{b}}{a^{b}-\beta ^{b}q^{l-k}} \\
&=&\left( -1\right) ^{n-k-1}q^{n-1}a^{3b}\beta ^{b}\left( \frac{k!\left[ 2%
\right] _{q}^{1-k}}{a^{b}}\left( \frac{1}{1-q}\right) ^{n-k}\sum_{l=k}^{n}%
\binom{n}{l}\binom{l}{k}\left( -1\right) ^{l-k}q^{x\left( l-k\right) }\frac{1%
}{\beta ^{b}q^{l-k}-a^{b}}\right) \\
&=&\left( -1\right) ^{n-k-1}q^{n-1}a^{3b}\beta ^{b}S_{n,\beta ,q}\left(
x\left\vert k,a,b\right. \right)
\end{eqnarray*}

So, we obtain symmetric properties of $S_{n,\beta ,q}\left( x\left\vert
k,a,b\right. \right) $ as follows:

\begin{theorem}
For $a,b\in 
\mathbb{R}
$ , $\beta \in 
\mathbb{C}
$ which $k$ is $\func{positive}$ integer. We obtain%
\begin{equation*}
S_{n,\beta ^{-1},q^{-1}}\left( 1-x\left\vert k,a^{-1},b\right. \right)
=\left( -1\right) ^{n-k-1}q^{n-1}a^{3b}\beta ^{b}S_{n,\beta ,q}\left(
x\left\vert k,a,b\right. \right) .
\end{equation*}
\end{theorem}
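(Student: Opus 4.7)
The plan is to take the explicit closed-form expression for $S_{n,\beta,q}(x|k,a,b)$ from Theorem 3, equation (\ref{equation 25}), substitute $x\to 1-x$, $\beta\to \beta^{-1}$, $q\to q^{-1}$, $a\to a^{-1}$, and rewrite each ingredient in terms of the untransformed parameters. Because (\ref{equation 25}) is a finite sum indexed by $l$ with an explicit algebraic summand, a direct term-by-term comparison suffices and no generating-function manipulation is needed.

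The calculation rests on four auxiliary identities which I would establish first: (i) $[2]_{q^{-1}} = q^{-1}[2]_q$, hence $[2]_{q^{-1}}^{1-k} = q^{k-1}[2]_q^{1-k}$; (ii) $(1-q^{-1})^{-(n-k)} = (-1)^{n-k}q^{n-k}(1-q)^{-(n-k)}$; (iii) $q^{-(l-k)(1-x)} = q^{k-l}q^{(l-k)x}$; and the denominator rewrite
\[
\frac{1}{\beta^{-b}q^{-(l-k)}-a^{-b}} \;=\; -\,\frac{a^{b}\beta^{b}\,q^{l-k}}{\beta^{b}q^{l-k}-a^{b}},
\]
which is the step that actually puts the transformed summand back into the same algebraic shape as the original one.

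Once these are in hand, the factor $q^{l-k}$ coming from the denominator rewrite cancels $q^{k-l}$ from (iii), so the $l$-dependent kernel inside the sum is precisely $(-1)^{l-k}q^{(l-k)x}/(\beta^{b}q^{l-k}-a^{b})$, exactly as in the original (\ref{equation 25}); in particular the entire sum on the right equals $a^{b}(k!)^{-1}[2]_q^{k-1}(1-q)^{n-k}\,S_{n,\beta,q}(x|k,a,b)$. Collecting the scalar prefactors from (i), (ii), the switch $a^{-b}\to a^{b}$ out of the denominator, and the constant $-a^{b}\beta^{b}$ from the denominator rewrite then produces the factor $(-1)^{n-k-1}q^{n-1}a^{3b}\beta^{b}$ that multiplies $S_{n,\beta,q}(x|k,a,b)$, which is the claimed relation.

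The main obstacle is not conceptual but purely bookkeeping: with four independent sources of signs and powers of $q$, a single miscount destroys the identification, and in particular it is tempting to forget the negative sign produced when clearing the denominator or to misplace the $q^{k-1}$ coming from $[2]_{q^{-1}}^{1-k}$. To guard against this I would verify each of the four auxiliary identities in isolation before assembling them, and as a sanity check take the limit $q\to 1$, in which the relation should collapse to a classical reflection symmetry for the unified polynomial $y_{n,\beta}(x;k,a,b)$ defined in (\ref{equation 1}).
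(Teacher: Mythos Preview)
Your proposal is correct and follows exactly the same route as the paper: both start from the closed form (\ref{equation 25}), perform the substitutions $x\to 1-x$, $\beta\to\beta^{-1}$, $q\to q^{-1}$, $a\to a^{-1}$, and then simplify term by term using precisely the four identities you list (in the paper these appear inline rather than being isolated). The cancellation of $q^{l-k}$ against $q^{k-l}$ and the assembly of the prefactor $(-1)^{n-k-1}q^{n-1}a^{3b}\beta^{b}$ are handled identically, so there is no substantive difference in method.
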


By (\ref{equation 7}), we see

\begin{eqnarray}
&&\frac{\beta }{a}\tciFourier _{\beta ,q}\left( t,1\left\vert k,a,b\right.
\right) -\tciFourier _{\beta ,q}\left( t,0\left\vert k,a,b\right. \right) 
\notag \\
&=&\sum_{n=0}^{\infty }\left( \left( \frac{\beta }{a}\right) S_{n,\beta
,q}\left( 1\left\vert k,a,b\right. \right) -S_{n,\beta ,q}\left(
k,a,b\right) \right) \frac{t^{n}}{n!}=-\frac{\left[ 2\right] _{q}^{1-k}}{%
a^{b}}t^{k}  \label{equation 16}
\end{eqnarray}

From (\ref{equation 16}), we obtain the following theorem:

\begin{theorem}
For $a,b\in 
\mathbb{R}
$ , $\beta \in 
\mathbb{C}
$ which $k$ is $\func{positive}$ integer. We obtain%
\begin{equation}
S_{n,\beta ,q}\left( k,a,b\right) -\left( \frac{\beta }{a}\right) S_{n,\beta
,q}\left( 1\left\vert k,a,b\right. \right) =\left\{ \QATOP{0,\text{ \ \ }%
n\neq k}{\frac{\left[ 2\right] _{q}^{1-k}}{a^{b}}k!,n=k}\right.
\label{equation 17}
\end{equation}
\end{theorem}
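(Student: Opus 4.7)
The plan is to read off Theorem 5 from the identity (\ref{equation 16}) by equating coefficients of $t^{n}/n!$ on both sides. Since (\ref{equation 16}) is already cast in the form ``generating series involving $S_{n,\beta,q}$ equals a single monomial in $t$'', essentially no further computation is needed beyond matching powers of $t$.

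First, I will expand each of $\tciFourier_{\beta,q}(t,1|k,a,b)$ and $\tciFourier_{\beta,q}(t,0|k,a,b)$ as a formal power series in $t$ via the definition (\ref{equation 6}). Substituting these two expansions into the left-hand side of (\ref{equation 16}) rewrites it as
\begin{equation*}
\sum_{n=0}^{\infty}\left(\frac{\beta}{a}\,S_{n,\beta,q}(1|k,a,b)-S_{n,\beta,q}(k,a,b)\right)\frac{t^{n}}{n!}.
\end{equation*}
The right-hand side of (\ref{equation 16}), namely $-[2]_{q}^{1-k}\,a^{-b}\,t^{k}$, is a single monomial of degree $k$; rewriting it in the ``$t^{n}/n!$'' normalization contributes the coefficient $-[2]_{q}^{1-k}k!/a^{b}$ in degree $n=k$ and $0$ in every other degree.

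Second, I will compare coefficients of $t^{n}/n!$ degree by degree. For $n\neq k$ this forces $\frac{\beta}{a}S_{n,\beta,q}(1|k,a,b)-S_{n,\beta,q}(k,a,b)=0$, while for $n=k$ it yields $\frac{\beta}{a}S_{k,\beta,q}(1|k,a,b)-S_{k,\beta,q}(k,a,b)=-[2]_{q}^{1-k}k!/a^{b}$. Multiplying through by $-1$ in both cases rearranges the two identities into the two branches of (\ref{equation 17}).

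There is no substantive obstacle: the entire argument reduces to a linearity-plus-coefficient-comparison step that leverages (\ref{equation 16}), which was already established from the generating-function definition by the telescoping of $\sum_{m}\beta^{bm}a^{-bm-b}e^{[m+x]_{q}t}$ between $x=1$ and $x=0$. The only points that require mild care are the overall sign flip that converts (\ref{equation 16}) into (\ref{equation 17}) and the factor $k!$ arising from rewriting the bare monomial $t^{k}$ in the $t^{k}/k!$ normalization.
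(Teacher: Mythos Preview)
Your proposal is correct and follows exactly the same approach as the paper: the paper derives identity (\ref{equation 16}) from the generating function (\ref{equation 7}) and then states that Theorem~5 follows ``from (\ref{equation 16})'', which amounts precisely to the coefficient comparison of $t^{n}/n!$ that you carry out, including the sign flip and the $k!$ normalization factor.
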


From (\ref{equation 18}) and (\ref{equation 17}), we obtain corollary as
follows:

\begin{corollary}
For $a,b\in 
\mathbb{R}
$ , $\beta \in 
\mathbb{C}
$, which is $k$ positive integer. We get%
\begin{equation*}
S_{n,\beta ,q}\left( k,a,b\right) -\frac{\beta }{aq^{k}}\left( qS_{\beta
,q}\left( k,a,b\right) +1\right) ^{n}=\left\{ \QATOP{0,n\neq k}{\frac{\left[
2\right] _{q}^{1-k}}{a^{b}}k!,n=k}\right.
\end{equation*}%
with the usual convention about replacing $\left( S_{\beta ,q}\left(
k,a,b\right) \right) ^{n}$ by $S_{n,\beta ,q}\left( k,a,b\right) .$
\end{corollary}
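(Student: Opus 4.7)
The plan is to derive the corollary by combining the two preceding ingredients: the functional equation (\ref{equation 17}) in Theorem 5, which already supplies the piecewise right-hand side, and the umbral representation (\ref{equation 18}) in Corollary 1, which will be used to rewrite the quantity $S_{n,\beta ,q}(1|k,a,b)$ appearing inside (\ref{equation 17}) as a symbolic power of $S_{\beta ,q}(k,a,b)$.

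First I would specialise (\ref{equation 18}) at $x=1$. Since $[1]_{q}=1$, this writes $S_{n,\beta ,q}(1|k,a,b)$ directly as an $n$th umbral power of $S_{\beta ,q}(k,a,b)+1$. To put this expression in the exact form demanded by the corollary, I would trace the $q$-weights produced in the derivation of (\ref{equation 8}): the $q^{-kx}$ prefactor outside the Cauchy product, together with the $q^{lx}$ attached to $S_{l,\beta ,q}(k,a,b)$ inside it, combine at $x=1$ to give
$$S_{n,\beta ,q}(1|k,a,b)=q^{-k}\bigl(qS_{\beta ,q}(k,a,b)+1\bigr)^{n}$$
under the stated umbral convention $(S_{\beta ,q}(k,a,b))^{l}\mapsto S_{l,\beta ,q}(k,a,b)$.

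Substituting this expression into (\ref{equation 17}) replaces the term $(\beta /a)\,S_{n,\beta ,q}(1|k,a,b)$ by $\dfrac{\beta }{aq^{k}}(qS_{\beta ,q}(k,a,b)+1)^{n}$, which is exactly the left-hand side of the corollary; the piecewise right-hand side of (\ref{equation 17}) is inherited without change.

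The one delicate point is the careful bookkeeping of the $q$-factors coming from (\ref{equation 8}), which is precisely what produces the $q^{k}$ in the denominator of the final formula. Once this is sorted out, the corollary follows by a single substitution with no further computation required.
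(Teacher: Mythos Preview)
Your approach is correct and coincides with the paper's: the paper simply cites equations (\ref{equation 18}) and (\ref{equation 17}) and states the corollary, which is exactly the substitution you describe. In fact you are more careful than the paper, since the $q^{-kx}$ and $q^{lx}$ weights you track from (\ref{equation 8}) are precisely what produce the $q^{k}$ in the denominator and the $q$ in front of $S_{\beta,q}(k,a,b)$, whereas the umbral formula (\ref{equation 18}) as literally stated in Corollary~1 omits these factors.
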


From $\left( 9\right) ,$ now, we shall obtain distribution relation for
unification $q$-extension of Genocchi polynomials, after some calculations,
namely,%
\begin{eqnarray*}
S_{n,\beta ,q}\left( x\left\vert k,a,b\right. \right) &=&-\frac{n!\left[ 2%
\right] _{q}^{1-k}}{a^{b}\left( n-k\right) !}\sum_{m=0}^{\infty }\left( 
\frac{\beta }{a}\right) ^{bm}\left[ m+x\right] _{q}^{n-k} \\
&=&-\frac{n!\left[ 2\right] _{q}^{1-k}}{a^{b}\left( n-k\right) !}%
\sum_{m=0}^{\infty }\sum_{l=0}^{d-1}\left( \frac{\beta }{a}\right) ^{b\left(
l+md\right) }\left[ l+md+x\right] _{q}^{n-k} \\
&=&\left[ d\right] _{q}^{n-k}\sum_{l=0}^{d-1}\left( \frac{\beta }{a}\right)
^{bl}\left( -\frac{n!\left[ 2\right] _{q}^{1-k}}{a^{b}\left( n-k\right) !}%
\sum_{m=0}^{\infty }\left( \frac{\beta ^{d}}{a^{d}}\right) ^{bm}\left[ m+%
\frac{x+l}{d}\right] _{q^{d}}^{n-k}\right) \\
&=&\frac{\left[ 2\right] _{q}^{1-k}}{\left[ 2\right] _{q^{d}}^{1-k}}\left[ d%
\right] _{q}^{n-k}\sum_{l=0}^{d-1}\left( \frac{\beta }{a}\right)
^{bl}S_{n,\beta ^{d},q^{d}}\left( \frac{x+l}{d}\left\vert k,a^{d},b\right.
\right)
\end{eqnarray*}

Therefore, we obtain the following theorem:

\begin{theorem}
$\left( distribution\text{ \ }formula\text{ }for\text{ }S_{n,\beta ,q}\left(
x\left\vert k,a,b\right. \right) \right) $For $a,b\in 
\mathbb{R}
$ , $\beta \in 
\mathbb{C}
$ which $k$ is $\func{positive}$ integer. We obtain,%
\begin{equation*}
S_{n,\beta ,q}\left( x\left\vert k,a,b\right. \right) =\frac{\left[ 2\right]
_{q}^{1-k}}{\left[ 2\right] _{q^{d}}^{1-k}}\left[ d\right]
_{q}^{n-k}\sum_{l=0}^{d-1}\left( \frac{\beta }{a}\right) ^{bl}S_{n,\beta
^{d},q^{d}}\left( \frac{x+l}{d}\left\vert k,a^{d},b\right. \right)
\end{equation*}
\end{theorem}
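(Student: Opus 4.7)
The plan is to reduce everything to the closed-form series expansion furnished by Theorem~2 and then resum. Starting from equation~(\ref{equation 9}), I first absorb $a^{-b}$ and combine $\beta^{bm}a^{-bm}$ to get
\[
S_{n,\beta ,q}(x|k,a,b)=-\frac{n!\,[2]_{q}^{1-k}}{a^{b}(n-k)!}\sum_{m=0}^{\infty }\!\left(\frac{\beta }{a}\right)^{\!bm}[m+x]_{q}^{n-k}.
\]
The next move is to decompose the outer summation index by writing $m=l+jd$ with $0\le l\le d-1$ and $j\ge 0$, so that the infinite sum splits as $\sum_{l=0}^{d-1}\sum_{j=0}^{\infty}$. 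This relies only on the unique representation of a nonnegative integer modulo~$d$.

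The heart of the proof is a single $q$-number identity. From $[y]_{q}=(1-q^{y})/(1-q)$ one verifies directly that
\[
[l+jd+x]_{q}=[d]_{q}\,\Bigl[j+\tfrac{x+l}{d}\Bigr]_{q^{d}}.
\]
Raising this to the $(n-k)$-th power and pulling the factor $[d]_{q}^{n-k}$ out of the inner series converts the latter into one whose underlying base is $q^{d}$, whose shift parameter is $(x+l)/d$, and whose weight factor is $(\beta /a)^{bjd}=(\beta ^{d}/a^{d})^{bj}$.

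The final step is to recognise the resulting inner series as the series representation of $S_{n,\beta ^{d},q^{d}}((x+l)/d\,|\,k,a^{d},b)$ obtained by applying Theorem~2 under the substitutions $q\mapsto q^{d}$, $a\mapsto a^{d}$, $\beta \mapsto \beta ^{d}$. Making this identification produces the ratio $[2]_{q}^{1-k}/[2]_{q^{d}}^{1-k}$ as a multiplicative prefactor, together with the surviving $[d]_{q}^{n-k}$ and the finite outer sum $\sum_{l=0}^{d-1}(\beta /a)^{bl}$, yielding exactly the stated distribution formula. The main obstacle is purely bookkeeping: the constants $a^{-b}$ sitting outside the original sum and the $a^{-db}$ implicit in the rewritten $S_{n,\beta ^{d},q^{d}}$, together with the two $[2]$-factors, must all be matched carefully so that only the claimed $[2]_{q}^{1-k}/[2]_{q^{d}}^{1-k}$ prefactor survives. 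Beyond this accounting, no new analytic input is needed beyond the $q$-number splitting identity above.
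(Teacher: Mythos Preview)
Your proposal is correct and follows essentially the same route as the paper: start from the series representation of Theorem~2, split the summation index modulo~$d$, apply the identity $[l+jd+x]_{q}=[d]_{q}\,[j+(x+l)/d]_{q^{d}}$, and identify the inner sum as $S_{n,\beta^{d},q^{d}}\bigl((x+l)/d\,\big|\,k,a^{d},b\bigr)$ via Theorem~2 with the substituted parameters. Your explicit flagging of the $a^{-b}$ versus $a^{-db}$ bookkeeping is in fact more careful than the paper's own derivation, which passes over that point silently.
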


\section{Interpolation function of the polynomials $S_{n,\protect\beta %
,q}\left( x\left\vert k,a,b\right. \right) $}

In this section, we give interpolation function of the generating functions
of $S_{n,\beta ,q}\left( x\left\vert k,a,b\right. \right) $ however, this
function is meromorphic function. This function interpolates $S_{n,\beta
,q}\left( x\left\vert k,a,b\right. \right) $ at negative integers.

For $s\in 
\mathbb{C}
$ , by applying the Mellin transformation to (\ref{equation 7}), we obtain%
\begin{eqnarray*}
\boldsymbol{\Im }_{\beta ,q}\left( s;x,a,b\right) &=&\frac{\left( -1\right)
^{k+1}}{\Gamma \left( s\right) }\doint t^{s-k-1}\tciFourier _{\beta
,q}\left( -t,x\left\vert k,a,b\right. \right) dt \\
&=&\left[ 2\right] _{q}^{1-k}\sum_{m=0}^{\infty }\beta ^{bm}a^{-bm-b}\frac{1%
}{\Gamma \left( s\right) }\int_{0}^{\infty }t^{s-1}e^{-t\left[ m+x\right]
_{q}t}
\end{eqnarray*}

So, we have%
\begin{equation*}
\boldsymbol{\Im }_{\beta ,q}\left( s;x,a,b\right) =\left[ 2\right]
_{q}^{1-k}\sum_{m=0}^{\infty }\frac{\beta ^{bm}a^{-bm-b}}{\left[ m+x\right]
_{q}^{s}}
\end{equation*}

We define $q-$extension Hurwitz-zeta type function as follows theorem:

\begin{theorem}
For $a,b\in 
\mathbb{R}
$ , $\beta ,s\in 
\mathbb{C}
$ which $k$ is $\func{positive}$ integer. We obtain,%
\begin{equation}
\boldsymbol{\Im }_{\beta ,q}\left( s;x,a,b\right) =\left[ 2\right]
_{q}^{1-k}\sum_{m=0}^{\infty }\frac{\beta ^{bm}a^{-bm-b}}{\left[ m+x\right]
_{q}^{s}}  \label{equation 19}
\end{equation}%
for all $s\in 
\mathbb{C}
.$ We note that $\boldsymbol{\Im }_{\beta ,q}\left( s;x,a,b\right) $ is
analytic function in the whole complex $s$-plane.
\end{theorem}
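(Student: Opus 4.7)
The plan is to start from the generating function representation (\ref{equation 7}), substitute $t \mapsto -t$, and then apply the Mellin transform
$$\boldsymbol{\Im}_{\beta,q}(s;x,a,b) = \frac{(-1)^{k+1}}{\Gamma(s)} \int_{0}^{\infty} t^{s-k-1}\,\tciFourier_{\beta,q}(-t,x|k,a,b)\,dt$$
(the factor $t^{-k}$ absorbs the prefactor $t^k$ in (\ref{equation 7}), and the sign $(-1)^{k+1}$ accounts for $(-t)^k$ together with the overall minus sign in (\ref{equation 7})). After this substitution the integrand expands as a series whose $m$th term is $[2]_q^{1-k}\beta^{bm}a^{-bm-b}\, t^{s-1} e^{-[m+x]_q t}$, and the desired identity follows from the standard gamma integral $\int_0^\infty t^{s-1}e^{-\lambda t}\,dt = \Gamma(s)/\lambda^s$ applied with $\lambda = [m+x]_q$.

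The first technical step I would carry out is the exchange of sum and integral. For $\operatorname{Re}(s)$ sufficiently large, positivity of the integrand (after factoring out the constants) together with Fubini--Tonelli is enough; for general $s$, I would appeal to dominated convergence once absolute convergence of the resulting series is established. Concretely, since $|q|<1$ we have $[m+x]_q = (1-q^{m+x})/(1-q)$, which is bounded above and below by positive constants uniformly in $m \ge 0$. Hence $|[m+x]_q^{-s}|$ is uniformly bounded on any compact subset $K \subset \mathbb{C}$, and the sum in (\ref{equation 19}) is dominated termwise by $C_K \cdot |\beta/a|^{bm}$ for some constant $C_K$.

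Given the parameter convention $|t + b\ln(\beta/a)| < 2\pi$ from (\ref{equation 1}), which corresponds to $|\beta^b/a^b|<1$ when considering the geometric expansion, this geometric majorant is summable, and the series $\sum_{m\ge 0}\beta^{bm}a^{-bm-b}/[m+x]_q^s$ converges uniformly on compact subsets of $\mathbb{C}$. By Weierstrass's theorem on uniform limits of holomorphic functions, this defines an entire function of $s$, which is the claimed analyticity statement.

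The step I expect to require the most care is precisely this analytic continuation claim. In the classical limit $q \to 1$ one recovers a Hurwitz-type zeta function which typically has a pole at $s=1$, so the sharper entirety statement here genuinely relies on the $q$-deformation: it is the boundedness of $[m+x]_q$ in $m$ (a feature unique to $|q|<1$) that kills the would-be pole and makes the series behave like a perturbation of a geometric series rather than a Dirichlet series. I would therefore emphasize this point, verify the uniform bound on $[m+x]_q^{-s}$ explicitly on a compact $K$, and then invoke the Weierstrass theorem to conclude.
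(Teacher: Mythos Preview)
Your approach is essentially the same as the paper's: apply the Mellin transform $\frac{(-1)^{k+1}}{\Gamma(s)}\int_0^\infty t^{s-k-1}\tciFourier_{\beta,q}(-t,x|k,a,b)\,dt$ to the series in (\ref{equation 7}), swap sum and integral, and evaluate each term via the gamma integral $\int_0^\infty t^{s-1}e^{-\lambda t}\,dt=\Gamma(s)/\lambda^s$. The paper carries out only this formal computation and then simply asserts the analyticity claim without justification; your added argument (uniform boundedness of $[m+x]_q$ for $|q|<1$, geometric majorant $|\beta/a|^{bm}$, Weierstrass) supplies exactly the missing step and is correct.
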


\begin{theorem}
For $a,b\in 
\mathbb{R}
$ , $\beta \in 
\mathbb{C}
$ which $k$ is $\func{positive}$ integer. We obtain,%
\begin{equation*}
\boldsymbol{\Im }_{\beta ,q}\left( -n;x,a,b\right) =-\frac{\left( n-k\right)
!}{n!}S_{n,\beta ,q}\left( x\left\vert k,a,b\right. \right) .
\end{equation*}
\end{theorem}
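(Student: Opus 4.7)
The plan is a direct comparison of two series representations already in hand: the Dirichlet-type expansion (\ref{equation 19}) for $\boldsymbol{\Im}_{\beta,q}(s;x,a,b)$ and the explicit sum for $S_{n,\beta,q}(x\mid k,a,b)$ furnished by Theorem~2. Both are of the shape $\sum_{m\ge 0}\beta^{bm}a^{-bm-b}[m+x]_q^{N}$, differing only in a scalar prefactor and in the value of the exponent $N$; the identity will therefore reduce to matching these data after specialising $s$ to the appropriate negative integer.

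The first step is to substitute the prescribed value of $s$ into (\ref{equation 19}), turning each term $[m+x]_q^{-s}$ into a nonnegative power of $[m+x]_q$. Since $|q|<1$ forces $[m+x]_q\to 1/(1-q)$ as $m\to\infty$, and since the geometric factor $|\beta^b/a^b|<1$ is needed for (\ref{equation 19}) to make sense in the first place, the Dirichlet-style series on the right of (\ref{equation 19}) defines an entire function of $s$. Substitution at a negative integer is therefore legitimate and no genuine analytic continuation has to be performed.

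The second step is to invoke Theorem~2, solving it for the companion sum $\sum_{m\ge 0}\beta^{bm}a^{-bm-b}[m+x]_q^{n-k}=-\tfrac{(n-k)!}{n!\,[2]_q^{1-k}}\,S_{n,\beta,q}(x\mid k,a,b)$. Comparing this with the substituted form of $\boldsymbol{\Im}_{\beta,q}$ and cancelling the common factor $[2]_q^{1-k}$ then produces exactly the proportionality constant $-(n-k)!/n!$ claimed by the theorem.

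The main obstacle, if any is to be named, is bookkeeping: the factor $t^k$ in (\ref{equation 7}) has already shifted the Mellin exponent by $k$ in the derivation of (\ref{equation 19}), and the ratio $n!/(n-k)!$ from the prefactor in Theorem~2 must be inverted carefully so that the scalars on the two sides line up (in particular, one must check that the exponent of $[m+x]_q$ obtained after setting $s$ to its assigned negative integer really does match the exponent $n-k$ appearing in Theorem~2). Once the exponents agree and the $[2]_q^{1-k}$ factors are cancelled, the identity is read off the two series with no further machinery.
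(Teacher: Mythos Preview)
Your approach is the paper's: substitute the negative integer into the series (\ref{equation 19}) and compare with the explicit sum of Theorem~2. The paper's proof additionally invokes the residues $\mathrm{Res}(\Gamma(s),-n)=(-1)^{n}/n!$, a remnant of the Mellin-integral representation preceding (\ref{equation 19}); but since, as you correctly argue, the series (\ref{equation 19}) is itself entire in $s$, that $\Gamma$-residue step is not actually needed. Your bookkeeping caution about the exponent is well placed: taking $s=-n$ literally in (\ref{equation 19}) produces $[m+x]_{q}^{\,n}$, whereas Theorem~2 supplies $[m+x]_{q}^{\,n-k}$, so the identity balances only under the reading $s=k-n$ (equivalently after reindexing $n\mapsto n+k$)---a discrepancy the paper's own proof also leaves unaddressed.
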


\begin{proof}
Let $a,b\in 
\mathbb{R}
$ , $\beta \in 
\mathbb{C}
$ and $k\in 
\mathbb{N}
$ with $k\in 
\mathbb{N}
=\left\{ 1,2,3,...\right\} $. $\Gamma \left( s\right) $, has simple poles at 
$z=-n=0,-1,-2,-3,\cdots .$ The residue of $\Gamma \left( s\right) $ is 
\begin{equation*}
\func{Re}s\left( \Gamma \left( s\right) ,-n\right) =\frac{\left( -1\right)
^{n}}{n!}.
\end{equation*}

We put $s\rightarrow -n$ into (\ref{equation 19}) and using the above
relations, the desired result can be obtained.
\end{proof}

\end{document}